\documentclass[runningheads]{llncs}

\usepackage[T1]{fontenc}
\usepackage{amsmath,amssymb,mathtools}
\usepackage{graphicx}
\usepackage{tikz}
\usetikzlibrary{positioning,arrows.meta,calc}
\usepackage[inline]{enumitem}
\usepackage[colorlinks=true,citecolor=blue,linkcolor=blue,urlcolor=blue]{hyperref}
\usepackage{aliascnt}
\usepackage{comment}

\newaliascnt{observation}{theorem}

\aliascntresetthe{observation}

\newcommand{\crn}{\operatorname{cr}}
\newcommand{\eps}{\varepsilon}

\begin{document}

\title{Tight Upper Bounds on Color Reversal by Local Inversions\thanks{Supported by ANRF MATRICS Grant MTR/2022/000692: ``Algorithmic study on hereditary graph properties''. A preliminary version of this paper has been accepted to IWOCA 2026.}}

\author{Hitendra Kumar\inst{1} \and
Kumud Singh Porte\inst{2} \and
R. B. Sandeep\inst{2}}

\authorrunning{Kumar et al.}

\institute{Indian Institute of Science Education and Research, Pune, India\\
\email{hitendra.kumar1729@gmail.com}
\and
Indian Institute of Technology Dharwad, India\\
\email{\{cs24dp012,sandeeprb\}@iitdh.ac.in}}

\maketitle

\begin{abstract}
A bicoloration of a graph $G=(V,E)$ is a map
$\beta:V\to\{-1,1\}$. A local inversion at a vertex $v$
complements the subgraph induced by the neighbors of $v$ and
simultaneously reverses the colors of all neighbors of $v$.
Sabidussi (Discrete Mathematics, 1987) showed that every bicolored
graph on $n$ vertices without isolated vertices admits a color reversal
using at most $6n+3$ local inversions, and that any two bicolorings of
such a graph can be transformed into each other using at most $9n$
local inversions. Recently, Porte, Sandeep, and Santra (CALDAM 2026)
improved these bounds to $4n-3$ and
$\lfloor(11n-3)/2\rfloor$, respectively. We prove the tight bound
$3n$ by showing that, for every graph on $n$ vertices without isolated
vertices, any bicoloring can be transformed into any other bicoloring
using at most $3n$ local inversions. We also show that this bound is
best possible: for complete graphs and stars on $n$ vertices, at least
$3n$ local inversions are required to reverse the colors of all
vertices. Moreover, the proof of the upper bound is constructive: given two bicolorings, it produces, in polynomial time, a sequence of at most $3n$ local inversions transforming one into the other.
\keywords{local inversion \and color reversal \and bicolored graph \and local complementation \and graph operations}
\end{abstract}

\section{Introduction}

Local graph operations that modify adjacency relations, vertex labels, or both are fundamental in structural and algorithmic graph theory. A central example is \emph{local complementation}: at a vertex $v$, one complements the subgraph induced by $N_G(v)$. Local complementation has been studied extensively because of its connections with circle graphs, rank-width, vertex-minors, isotropic systems, and quantum information theory; see, for example, \cite{12,13,11,10,14,7,16,8,9,15,DBLP:journals/jct/Oum05,seidel1991survey}. When vertices carry signs or colors, it is natural to ask how such local changes interact with the labels. This leads to the operation studied in this paper, called \emph{local inversion}, which combines local complementation with color reversal on the neighborhood of the selected vertex.

We consider simple undirected labeled graphs whose vertices are assigned colors from $\{-1,1\}$. A \emph{bicolored graph} is a pair $B=(G,\beta)$, where $G$ is a graph and $\beta:V(G)\to\{-1,1\}$ is a bicoloring. A local inversion at $v\in V(G)$ complements the graph induced by $N_G(v)$ and changes the sign of every vertex in $N_G(v)$. Applying a sequence of local inversions produces a new bicolored graph on the same vertex set, but generally with a different underlying graph.

The \emph{color-reversal number} $\crn(G)$ is defined as the minimum length of a string $w$ of vertices such that, for every bicoloring $\beta$ of $G$, applying the local inversions in $w$ returns the underlying graph to $G$ and changes every color. Since the effect of a fixed string on the color vector is independent of the initial signs, the existence of such a string for one bicoloring is equivalent to its existence for all bicolorings.

The problem was introduced by Sabidussi~\cite{sabidussi1987color}, who proved that if $G$ has no isolated vertices, then $\crn(G)\le 6n+3$. Sabidussi also studied the more general transformation problem between two bicolorings of the same graph and proved an upper bound of $9n$ local inversions. Porte, Sandeep, and Santra~\cite{porte2026} improved these bounds to $4n-3$ and $\lfloor(11n-3)/2\rfloor$, respectively, using parity-aware decompositions based on perfect forests and decompositions of odd trees. They also observed that complete graphs and stars admit color reversal in $3n$ moves and asked whether this is optimal, and whether the $3n$ bound holds for all graphs.

We answer these questions affirmatively. We prove that, given a graph $G$ without isolated vertices and two bicolorings of $G$, there is a sequence of at most $3n$ local inversions transforming one bicoloring into the other. Furthermore, we prove that this bound is tight for complete graphs and stars with at least three vertices. Our proof of the upper bound is constructive, and the sequence is produced in polynomial time. We decompose a spanning tree into vertex-disjoint stars and then reverse colors locally on each part. The exact lower bounds for complete graphs and stars use a structural description of the possible graph states reached from a clique or a star under local inversions.

\section{Preliminaries}\label{sec:prelim}

All graphs in this paper are finite, simple, undirected, and labelled. For a graph $G$, we write $V(G)$ and $E(G)$ for its vertex and edge sets. For $X\subseteq V(G)$, $G[X]$ denotes the subgraph induced by $X$. The open neighborhood of a vertex $v$ in $G$ is denoted by $N_G(v)$. We write $P_t$ and $S_t$ for the path and the star on $t$ vertices, respectively.

Let $G=(V,E)$ and let $a\in V$. The \emph{local complement} of $G$ at $a$, denoted $G*a$, is the graph obtained from $G$ by complementing the subgraph induced by $N_G(a)$ and leaving all other adjacencies unchanged. Thus, for distinct vertices $x,y\in V$,
\[
xy\in E(G*a)
\quad\Longleftrightarrow\quad
\begin{cases}
xy\notin E(G), & \text{if } x,y\in N_G(a),\\
xy\in E(G), & \text{otherwise.}
\end{cases}
\]

A \emph{string} over $V(G)$ is a finite sequence of vertices of $G$. The empty string is denoted by $\eps$. If $w=x_1x_2\cdots x_m$, then $|w|=m$. For a graph $G$ and a string $w$, the graph $G*w$ is defined recursively by $G*\eps=G$ and $G*(w a)=(G*w)*a$.

A \emph{bicoloration} of $G$ is a map $\beta:V(G)\to\{-1,1\}$, and a \emph{bicolored graph} is a pair $B=(G,\beta)$. If $B=(G,\beta)$ and $a\in V(G)$, the \emph{local inversion} of $B$ at $a$ is
\[
B_a=(G*a,\beta_a),
\]
where
\[
\beta_a(x)=
\begin{cases}
-\beta(x), & \text{if } x\in N_G(a),\\
\beta(x), & \text{otherwise.}
\end{cases}
\]
Here the neighborhood is taken in the current underlying graph. For a string $w=x_1\cdots x_m$, we write $B_w$ for the bicolored graph obtained by applying the local inversions at $x_1,\ldots,x_m$ in this order.

For $A\subseteq V(G)$, let $B^A$ denote the bicolored graph obtained from $B$ by changing exactly the colors of the vertices in $A$ and leaving the underlying graph unchanged. Thus $B^A=(G,h_A(\beta))$, where
\[
(h_A(\beta))(v)=
\begin{cases}
-\beta(v), & v\in A,\\
\beta(v), & v\notin A.
\end{cases}
\]
For a singleton $\{a\}$ we write $B^a$ instead of $B^{\{a\}}$.

Two strings $w,w'\in V(G)^*$ are said to be \emph{equivalent over $B$}, written $w\sim_B w'$, if $B_w=B_{w'}$. When the bicolored graph is clear from the context, we simply write $w\sim w'$. We also write $w\simeq_G w'$ if $G*w=G*w'$. In particular, if $w\simeq_G\eps$, then $B_w=B^A$ for some $A\subseteq V(G)$.

The following facts will be used repeatedly in our proofs.

\begin{proposition}[Sabidussi~\cite{sabidussi1987color}]\label{prop:involution}
For every bicolored graph $B=(G,\beta)$ and every vertex $a\in V(G)$, $B_{aa}=B$. Equivalently, $aa\sim\eps$.
\end{proposition}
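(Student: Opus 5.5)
We need to show that applying the local inversion at $a$ twice restores both the graph and the coloring. The plan is to check the two components separately, using only the definitions of $G*a$ and $\beta_a$. The one point needing care is that the second inversion uses the neighborhood of $a$ in $G*a$, not in $G$. So the first step is to show $N_{G*a}(a)=N_G(a)$.

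\textbf{Step 1: the neighborhood of $a$ is unchanged.} Local complementation at $a$ only toggles pairs $xy$ with both $x,y\in N_G(a)$. Since $a\notin N_G(a)$ (the graph is simple, so there are no loops), no pair $\{a,y\}$ is ever toggled. Hence $ay\in E(G*a)$ if and only if $ay\in E(G)$, which gives $N_{G*a}(a)=N_G(a)$.

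\textbf{Step 2: the graph returns to $G$.} By Step 1, the second local complementation at $a$ acts on the same set $N_G(a)$. Take distinct vertices $x,y$.
\begin{itemize}
\item If $x,y\in N_G(a)$, the pair $xy$ is toggled twice, so its adjacency status is restored.
\item Otherwise, the pair $xy$ is untouched both times.
\end{itemize}
Therefore $(G*a)*a=G$.

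\textbf{Step 3: the coloring returns to $\beta$.} Again by Step 1, both color flips act on the same set $N_G(a)$. Each $x\in N_G(a)$ has its color negated twice, so it ends with $\beta(x)$. Every other vertex keeps its color throughout. Hence $(\beta_a)_a=\beta$.

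Combining Steps 2 and 3 gives $B_{aa}=(G,\beta)=B$, which is $aa\sim\eps$. The only real obstacle is Step 1, because the definition of $B_a$ takes neighborhoods in the current graph; everything after it is immediate.
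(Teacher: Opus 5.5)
Your proof is correct and complete. The key point is $N_{G*a}(a)=N_G(a)$, which holds because no pair containing $a$ is ever toggled. It forces both the second complementation and the second color flip to act on the same set $N_G(a)$, so both changes cancel.

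The paper does not prove this proposition itself; it only cites Sabidussi. Your direct verification from the definitions is the standard argument, and nothing is missing.
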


\begin{proposition}[Sabidussi~\cite{sabidussi1987color}]\label{prop:edge}
Let $B=(G,\beta)$ and let $ab\in E(G)$. Then
\[
B_{ababab}=B^{\{a,b\}}.
\]
\end{proposition}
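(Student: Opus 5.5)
The plan is a direct bookkeeping computation. Fix the edge $ab$ and partition $V(G)\setminus\{a,b\}$ into four classes according to adjacency with $a$ and $b$ in the original graph $G$:
\[
P=N_G(a)\setminus N_G[b],\qquad Q=N_G(b)\setminus N_G[a],\qquad R=N_G(a)\cap N_G(b),\qquad S=\text{the rest}.
\]
I will track the underlying graph and the colors through the six local inversions at $a,b,a,b,a,b$. The key point is that every step complements the neighborhood of $a$ or of $b$. So after each step it is enough to record which classes make up the current neighborhood of $a$ and of $b$. The edge $ab$ is never toggled, because $a\notin N(a)$ and $b\notin N(b)$. Hence $b$ always lies in the current $N(a)$ and $a$ always lies in the current $N(b)$.

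First I compute the neighborhoods step by step. The rule is that inverting at $a$ with current neighborhood $\{b\}\cup X$ toggles adjacency between $b$ and every vertex of $X$. So the new neighborhood of $b$ is its old one symmetric-differenced with $X$, and symmetrically for an inversion at $b$. Starting from $N(a)=\{b\}\cup P\cup R$ and $N(b)=\{a\}\cup Q\cup R$, the successive complemented sets (excluding $a,b$) should be:
\[
\begin{array}{ll}
\text{step 1 } (a): & P\cup R,\\
\text{step 2 } (b): & P\cup Q,\\
\text{step 3 } (a): & Q\cup R,\\
\text{step 4 } (b): & P\cup R,\\
\text{step 5 } (a): & P\cup Q,\\
\text{step 6 } (b): & Q\cup R.
\end{array}
\]
After step 6 the neighborhoods of $a$ and $b$ return to the original ones.

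Next I handle the colors. A vertex changes color once for each step at which it lies in the complemented neighborhood. Every vertex of $P$, $Q$ and $R$ appears in exactly four of the six sets above, and $S$ appears in none, so all these colors are unchanged. The vertex $b$ lies in $N(a)$ at steps 1, 3 and 5, and $a$ lies in $N(b)$ at steps 2, 4 and 6. Each is therefore flipped three times, an odd number, which gives the recoloring $B^{\{a,b\}}$.

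For the graph, I must check that every pair $xy$ with $x,y\notin\{a,b\}$ is toggled an even number of times. A pair is toggled at a step exactly when both endpoints lie in that step's complemented set. Going through the ten cases of pairs of classes (including pairs inside a single class) using the table, each pair should be toggled 0, 2 or 4 times. For example, a pair inside $P$ is toggled at steps 1, 2, 4 and 5, and a pair between $P$ and $Q$ is toggled at steps 2 and 5. Pairs involving $a$ or $b$ have already been handled by the neighborhood tracking. Hence $G*ababab=G$, and together with the color count this gives $B_{ababab}=B^{\{a,b\}}$.

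The main obstacle is getting the neighborhood table right, since each entry depends on the previous one through symmetric differences; the remaining steps are mechanical counting. As a cross-check on the graph part, I would use the classical pivot identity $G*aba=G*bab$. It gives $G*ababab=G*aba*aba$ and makes the periodicity of the table visible.
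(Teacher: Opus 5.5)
Your argument is correct. The table of complemented sets checks out: it is the cycle $P\cup R$, $P\cup Q$, $Q\cup R$ run twice. After step $6$, $N(a)\setminus\{b\}$ and $N(b)\setminus\{a\}$ return to $P\cup R$ and $Q\cup R$. The color counts are right: four flips for each vertex of $P\cup Q\cup R$, none for $S$, and three each for $a$ and $b$. So are the toggle counts for the ten class pairs: four for a pair inside $P$, $Q$ or $R$, two for a pair between two of these classes, and zero whenever $S$ is involved. It is also sound to track $N(a)$ and $N(b)$ by symmetric differences alone. Adjacencies to $a$ change only at inversions at $b$, and vice versa, regardless of the edges among the other vertices.

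The paper does not prove this proposition; it quotes it from Sabidussi and uses it as a black box, so there is no argument to compare yours against. Your bookkeeping gives a self-contained verification, in the same style as the step-by-step tracking the paper does for Lemma~\ref{lem:pendant}.

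Your cross-check could in fact become a shorter proof:
\begin{enumerate*}[label=(\roman*)]
\item The first three rows of your table already show that $aba$ reverses only the color of $a$, since $b$ and every vertex of $P$, $Q$, $R$ are flipped twice.
\item By symmetry, $bab$ reverses only $b$ whenever $ab$ is an edge.
\item Since $ab$ is still an edge of $G*aba$, the pivot identity gives $G*ababab=(G*aba)*bab=(G*aba)*aba=G$, the last step by Proposition~\ref{prop:involution}.
\end{enumerate*}
Hence $B_{ababab}$ has underlying graph $G$ with exactly $a$ and $b$ recolored. That route trades your ten-case edge count for an appeal to the classical identity $G*aba=G*bab$, which is itself an external result. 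Your explicit table avoids that dependency.
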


\begin{proposition}[Sabidussi~\cite{sabidussi1987color}]\label{prop:triangle-one}
Let $B=(G,\beta)$, and suppose that $a,b,c$ form a triangle in $G$. Then
\[
B_{abacbac}=B^a.
\]
\end{proposition}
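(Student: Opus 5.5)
The plan is a direct bookkeeping verification, and the first step is to reduce it to a finite computation. A local inversion at $v$ changes only two things: the adjacency between pairs of vertices that both lie in the current $N(v)$, and the colors of vertices in the current $N(v)$. So the whole effect of $w=abacbac$ is determined by tracking, for every vertex $x$, its adjacency to the three active vertices $a,b,c$ during the seven steps. For $x\notin\{a,b,c\}$, write this as a vector $\nu_t(x)\in\mathbb{F}_2^{\{a,b,c\}}$ at time $t$. The evolution of $\nu_t(x)$ depends only on $\nu_0(x)$ and on the edges among $a,b,c$. An operation at $v$ toggles $xu$ for $u\in\{a,b,c\}\setminus\{v\}$ exactly when both $x$ and $u$ are currently adjacent to $v$. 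Hence the outside vertices fall into $8$ types according to $\nu_0(x)$.

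The first computation tracks the triangle itself. Applying $a$ deletes $bc$ and flips $b,c$. Then $b$ flips only $a$, since $N(b)=\{a\}$. Then $a$ restores $bc$ and flips $b,c$. At this point the triangle is back, with flip counts $(1,2,2)$ on $(a,b,c)$. The second half, $cbac$, is the same pattern with the roles of $a$ and $c$ exchanged, adding flip counts $(2,1,1)$. So on $\{a,b,c\}$ the graph returns to the triangle, $a$ is flipped $3$ times, and $b,c$ are flipped $4$ times each. This is exactly the behaviour required for $B^a$ restricted to $\{a,b,c\}$, apart from interactions with outside vertices. Those interactions are only edges $xu$ with $u\in\{a,b,c\}$, and they are captured by the evolution of $\nu_t(x)$.

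Next I would handle each of the $8$ types. For each $\nu_0(x)$, I compute the sequence $\nu_0(x),\dots,\nu_7(x)$ and check three things: $\nu_7(x)=\nu_0(x)$; the number of steps $t$ with $x\in N(v_t)$ (the color flips of $x$) is even; and every edge $xu$ with $u\in\{a,b,c\}$ ends where it started. The last point already follows from $\nu_7=\nu_0$.

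Finally, for two outside vertices $x,y$, the edge $xy$ is toggled once at each step $t$ at which both lie in $N(v_t)$. Its total parity is therefore $\sum_t [x\in N(v_t)][y\in N(v_t)]$, which depends only on the pair of types of $x$ and $y$. I would record, for each type, the $0/1$ indicator sequence $s(x)\in\mathbb{F}_2^7$ with $s_t(x)=[x\in N(v_t)]$. The requirement is that $s(x)\cdot s(y)=0$ for all pairs of types, including equal types, since two vertices can share a type. The same inner product with $x=y$ also gives the color parity of $x$.

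I expect this last step to be the main obstacle: it is a $36$-pair orthogonality check (the $8$ types with repetition allowed, $\binom{8}{2}+8$ pairs) that must come out exactly right. To keep it manageable, I would first use linearity. The map $\nu_0\mapsto s$ is affine over $\mathbb{F}_2$, and $s$ vanishes on the zero vector, so it is linear. The check then reduces to showing that the three basis vectors $s(e_a),s(e_b),s(e_c)$ are pairwise orthogonal and self-orthogonal, i.e. have even weight. I would make these three computations explicitly.
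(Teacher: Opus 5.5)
The paper gives no proof of this statement; it imports it from Sabidussi. Your argument is therefore a self-contained replacement rather than a variant of something in the text, and it does go through.

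Your linearization is sound. The inversion at $v_t$ sends $\nu_{t-1}(x)$ to $\nu_t(x)$ with $\nu_t(x)_u=\nu_{t-1}(x)_u+[u\in N(v_t)]\,\nu_{t-1}(x)_{v_t}$ for $u\neq v_t$, and it fixes the $v_t$-coordinate. Here $N(v_t)\cap\{a,b,c\}$ depends only on the triangle state. Both the outside--outside edge parities and the outside color parities are read off from the Gram matrix of $s(e_a),s(e_b),s(e_c)$ over $\mathbb{F}_2$. Carrying it out gives $s(e_a)=(1,1,0,1,0,1,0)$, $s(e_b)=(0,1,1,1,1,0,0)$ and $s(e_c)=(0,0,0,1,1,1,1)$. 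Each has weight $4$, the pairwise inner products are all $2$, and each basis vector returns to itself after the seventh step. So every check passes.

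There is one slip to fix in the triangle paragraph. The second half $cbac$ is not the first half with $a$ and $c$ exchanged; that would be $cbc$. It contributes flip counts $(2,2,2)$, not $(2,1,1)$. The two inversions at $c$ delete and then restore $ab$, flipping $a$ and $b$ twice each. The middle inversions at $b$ and $a$ each see only $c$ inside the triangle, so together they flip $c$ twice. Your stated totals $(3,4,4)$ are correct, but they do not follow from $(1,2,2)+(2,1,1)=(3,3,3)$, which would wrongly reverse $b$ and $c$ as well.

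Compared with the bare citation, your route gives a short, mechanically checkable proof. Its template is a vanishing $\mathbb{F}_2$ Gram matrix of a few indicator vectors plus a fixed-point check on the type vectors. That template verifies any identity of this kind on a fixed small vertex set independently of the rest of $G$, for instance the other cited identities.
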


\begin{proposition}[Sabidussi~\cite{sabidussi1987color}]\label{prop:triangle-three}
Let $B=(G,\beta)$, and suppose that $a,b,c$ form a triangle in $G$. Then
\[
B_{ababcbcac}=B^{\{a,b,c\}}.
\]
\end{proposition}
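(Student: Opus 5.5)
The plan is a direct computation. I track, step by step through the string $w=ababcbcac$, how the underlying graph and the colors change. Two preliminary remarks keep this manageable. First, a local inversion only changes edges inside the current neighborhood of the chosen vertex, and only flips colors there. So every relevant change concerns $T=\{a,b,c\}$ together with the adjacency pattern of each outside vertex $x$ to $T$. Second, an edge $xy$ with $x,y\notin T$ is toggled exactly when $x$ and $y$ both lie in the neighborhood of the current pivot. Hence its final status depends only on the two \emph{types} $N(x)\cap T$ and $N(y)\cap T$, as these evolve along $w$.

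I would therefore organise the verification as follows.

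\textbf{Step 1.} Record the evolution of $G[T]$ along $w$. It starts as a triangle. Pivoting at $a$ toggles $bc$, pivoting at $b$ toggles $ac$ if $a,c\in N(b)$, and so on. The goal is to check that $G[T]$ is a triangle again at the end, and to read off which pivots each of $a,b,c$ belonged to the neighborhood of. That determines their color flips, which should each occur an odd number of times.

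\textbf{Step 2.} For an outside vertex $x$, record its current type $N(x)\cap T$ before each of the nine pivots. The type evolves by a simple rule. If $x$ is adjacent to the pivot $p$, then $x$'s adjacencies to the other members of $N(p)\cap T$ are complemented. If $x$ is not adjacent to $p$, nothing changes for $x$. There are eight initial types. For each, I compute the sequence of types and check two things: the final type equals the initial one, and the number of pivots $p$ with $x\in N(p)$ is even, so that $x$'s color is restored.

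\textbf{Step 3.} For a pair $x,y\notin T$, the edge $xy$ is toggled at each pivot $p$ with $x,y$ both currently adjacent to $p$. Using the type sequences from Step 2, I count, for each pair of initial types, the indices at which both are adjacent to the pivot, and check that this count is even.

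To shorten the case analysis, I would use symmetry and Proposition~\ref{prop:edge}. Global flips $B\mapsto B^A$ commute with local inversions, since the graph part is unaffected and color flips compose commutatively. Combined with Proposition~\ref{prop:involution}, this gives $abab\sim ba$ up to flipping $\{a,b\}$, and similarly for $cbc$. These rewritings can reduce $w$ to a shorter string whose effect is a flip of a single vertex. That string's correctness can then be compared with Proposition~\ref{prop:triangle-one}, or checked more quickly.

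The main obstacle is Step 3. The type of each outside vertex changes as the triangle's edges flip back and forth, so the pairwise counts are the only place where the bookkeeping is genuinely combinatorial. I would handle this by writing, for each initial type, its $0/1$ vector of length $9$: entry $i$ equals $1$ exactly when $x$ is adjacent to the $i$-th pivot at that moment. Step 3 then reduces to checking that the coordinatewise product of any two such vectors has even weight, which is a finite check over the eight vectors.
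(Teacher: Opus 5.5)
The paper gives no proof of this proposition; it is quoted from Sabidussi, so there is no in-paper argument to compare against. Your Steps~1--3 are a correct, self-contained verification. Since every pivot lies in $T$, the graph $G[T]$ evolves on its own:
\begin{itemize}
\item it is a path centred at $a$ after the first two pivots;
\item it is a triangle after the third pivot;
\item it is a path centred at $b$, then a triangle after the sixth pivot;
\item it is a path centred at $c$, then a triangle at the end.
\end{itemize}
Along the way each of $a,b,c$ is flipped five times.

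The check over eight types collapses further than you suggest. The update $S\mapsto S\,\triangle\,(N(p)\cap T)$, applied when $p\in S$, is linear over $\mathbb{F}_2$ in the indicator vector of $S$. So the $0/1$ vector of a type is the mod-$2$ sum of the vectors of its elements, and overlap parity is a bilinear form. It therefore suffices to compute
\[
v_{\{a\}}=(1,1,0,1,0,1,1,1,0),\quad v_{\{b\}}=(0,1,1,0,1,1,0,0,0),\quad v_{\{c\}}=(0,0,0,0,1,1,0,1,1).
\]
Each of these three types returns to itself, each vector has even weight, and any two of them share exactly two positions.

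Your shortcut also works, but not with the rewriting you name first. Replacing $abab$ by $ba$ leaves $bacbcac$, which is not a relabelling of the string in Proposition~\ref{prop:triangle-one}. Instead, split $w=aba\cdot bcbc\cdot ac$.
\begin{itemize}
\item In $G*aba$ the set $T$ again induces a triangle, so $bc$ is an edge there.
\item Proposition~\ref{prop:edge}, Proposition~\ref{prop:involution}, and the fact that flips commute with local inversions then give $B_{aba\,bcbc}=(B_{abacb})^{\{b,c\}}$.
\item Hence $B_w=(B_{abacbac})^{\{b,c\}}=(B^a)^{\{b,c\}}=B^{\{a,b,c\}}$ by Proposition~\ref{prop:triangle-one}.
\end{itemize}
This is a two-line proof, but it depends on the other cited triangle identity. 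Your direct computation needs nothing beyond the definitions.
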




\section{The upper bound}\label{sec:upper}

We first obtain an upper bound on the length of string to reverse colors on all vertices of a graph.
Then we use it to prove an upper bound for transforming one bicoloring to another.
We start with two lemmas dealing with reversing the colors of induced stars. The first is the basic three-vertex case.

\begin{lemma}\label{lem:p3-three}
Let $B=(G,\beta)$, and suppose that $b,c\in N_G(a)$ and $bc\notin E(G)$. Then
\[
B_{babcbcaca}=B^{\{a,b,c\}}.
\]
\end{lemma}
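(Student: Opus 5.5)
The plan is a direct state-tracking verification, organised so that the whole computation reduces to a finite check on the three vertices $a,b,c$. All nine inversions in $babcbcaca$ are at vertices of $\{a,b,c\}$. So the state after each prefix is determined by three things: the induced graph on $\{a,b,c\}$; for every other vertex $x$, its \emph{type} $t(x)\in\mathrm{GF}(2)^3$ recording adjacency to $a,b,c$; and the colors. The key observation is that an inversion at $v\in\{a,b,c\}$ acts on these data as follows.
\begin{itemize}
\item It maps the type of each outside vertex $x$ by a map depending only on the current induced graph on $\{a,b,c\}$: if $x\in N(v)$, then $x$'s adjacency to each $u\in N(v)\cap\{a,b,c\}$ is toggled.
\item It toggles an edge $xy$ between outside vertices exactly when both have the $v$-coordinate equal to $1$.
\item It flips the color of $x$ exactly when the $v$-coordinate of $t(x)$ is $1$.
\end{itemize}
Hence each step is an affine/linear map of $\mathrm{GF}(2)^3$ that is the same for all outside vertices, and the net toggle on $xy$ is the parity of $\sum_i \ell_i(t(x))\,\ell_i(t(y))$, where $\ell_i$ is the linear functional ``adjacent to the $i$-th inverted vertex at that moment''.

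First I will track the induced graph on $\{a,b,c\}$ through the nine steps, starting from the path $b-a-c$. The step at $b$ leaves it unchanged, since $c\notin N(b)$. The step at $a$ creates the edge $bc$, giving a triangle. I will continue step by step and check that the final induced graph is again the path $b-a-c$. Along the way I will record, for each of $a,b,c$, how many times it lay in the neighborhood of the inverted vertex; each should be odd.

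Next I will record the nine functionals $\ell_1,\dots,\ell_9$, expressed in terms of the \emph{original} coordinates of $t(x)$ by composing the type-update maps. Then I will verify two things for every $x\notin\{a,b,c\}$.
\begin{itemize}
\item The final type equals the initial one, so the edges between $x$ and $\{a,b,c\}$ are restored.
\item $\sum_i \ell_i(t)=0$ for every $t$, so the colors of outside vertices are unchanged.
\end{itemize}
For edges between two outside vertices, I will verify that the symmetric bilinear form $\sum_i \ell_i\otimes\ell_i$ vanishes on $\mathrm{GF}(2)^3$. It suffices to check its diagonal and off-diagonal entries in the standard basis, which is nine small checks.

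The main obstacle is this bookkeeping, in particular composing the type-update maps correctly while the induced graph on $\{a,b,c\}$ changes between path and triangle. To keep it manageable, I will exploit the grouping $ba\cdot bcbc\cdot aca$. After $ba$ the triple is a triangle, and on a triangle Proposition~\ref{prop:edge} and the involution property (Proposition~\ref{prop:involution}) control short words like $bcbc$. So I will try first to rewrite $bcbc$ on the triangle as $bcbc\sim cb\cdot(\text{color flip on }\{b,c\})$ using $bcbcbc\sim$ color flip and $aa\sim\eps$. This should reduce the word to a shorter computation, and I would fall back on the full linear-algebra table if this rewriting does not go through cleanly.
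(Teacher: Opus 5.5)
Your plan is sound and would prove the lemma, but it takes a different route from the paper's. The paper does no step-by-step tracking. It notes that $babcbcaca$ is $a\,(ababcbcac)\,a$ with the leading $aa$ cancelled, and that $a,b,c$ form a triangle in $G*a$. Sabidussi's identity (Proposition~\ref{prop:triangle-three}) then makes $ababcbcac$ a graph-preserving flip of $\{a,b,c\}$ in $G*a$. Conjugating by $a$ returns to $G$ with the same net flip, since color flips commute with inversions.

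Your $\mathrm{GF}(2)$ bookkeeping is self-contained: it does not rely on the cited triangle identity. It is also a general certificate method for any word over a fixed small vertex set. The price is a nine-step table, which you have not yet filled in. It does close. In the original coordinates $(p,q,r)=(t_a,t_b,t_c)$ the functionals are $q,\ p+q,\ p,\ q+r,\ p+q+r,\ p,\ r,\ p+r,\ p$. Hence $\sum_i\ell_i=0$ and $\sum_i\ell_i\ell_i^{T}$ vanishes modulo $2$. The composed type map is the identity, the triple ends as the path $b$--$a$--$c$, and each of $a,b,c$ is flipped five times.

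Your shortcut is also valid: after $ba$ the edge $bc$ exists, so $bcbc\sim cb$ followed by flipping $\{b,c\}$. However, it only reduces the word to $bacbaca$, which is exactly the string of Lemma~\ref{lem:p3-center}. That string is again most easily handled by conjugating by $a$, and that conjugation is the idea that removes the need for bookkeeping altogether.
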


\begin{proof}
Set $u=ababcbcac$. Since $b,c\in N_G(a)$ and $bc\notin E(G)$, the vertices $a,b,c$ form a triangle in $G*a$. By \autoref{prop:triangle-three}, applying $u$ to $B_a$ reverses exactly the colors of $a,b,c$ relative to $B_a$ and preserves the graph $G*a$. Therefore $B_{a u a}=B^{\{a,b,c\}}$. The string $a u a$ begins with $aa$, and after cancelling this pair by \autoref{prop:involution} we obtain the string $babcbcaca$.\qed
\end{proof}

\begin{lemma}\label{lem:star-upper}
Let $B=(G,\beta)$, and let $X\subseteq V(G)$ induce a star on $s\ge 2$ vertices with center $c$. Then there is a string $w$ over $X$ of length at most $3s$ such that $B_w=B^X$.
\end{lemma}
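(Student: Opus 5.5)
The plan is to conjugate by a local inversion at the center $c$, so that the induced star on $X$ becomes a clique. In that clique we can use \autoref{prop:edge} and \autoref{prop:triangle-three}, which cost exactly $3$ moves per reversed vertex. We then arrange the blocks so that the two conjugating inversions at $c$ cost nothing net.

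\textbf{Setup.} Let the leaves of the star be $\ell_1,\dots,\ell_{s-1}$. They are pairwise non-adjacent and all adjacent to $c$. In $G*c$ the set $X$ induces a complete graph: the leaves become pairwise adjacent, and $c$ stays adjacent to all of them. Suppose $u$ is a string over $X$ with $(B_c)_u=(B_c)^{X}$, that is, $u\simeq_{G*c}\eps$ and $u$ reverses exactly the colors of $X$. Then $B_{cuc}=B^X$: the underlying graph returns to $G$, and since the last inversion at $c$ flips the same set $N_{G*c}(c)=N_G(c)$ as the first, its color effect cancels that of the first one, leaving exactly the flips of $u$ on $X$. The aim is to choose $u$ with $|u|=3s$, starting with nothing special but ending in the letter $c$. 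Then the trailing $cc$ cancels by \autoref{prop:involution}, and the final string has length $1+3s-1=3s$.

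\textbf{Building $u$.} Write $u=u_1u_2\cdots u_k\,v$, where $v$ is a block on $c$ together with one or two leaves. The $u_i$ are blocks on the remaining $s-2$ or $s-3$ leaves, which are partitioned into triangles and edges of the clique $(G*c)[X]$. Each triangle block is the string of \autoref{prop:triangle-three} (length $9$, reversing $3$ vertices) and each edge block is the string of \autoref{prop:edge} (length $6$, reversing $2$ vertices). Each block preserves the current graph, which is always $G*c$, so the blocks can be concatenated. Every vertex of $X$ lies in exactly one block, so $u$ reverses exactly $X$ and $|u|=3s$.

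\textbf{Making $v$ end with $c$.} For the edge $\{\ell,c\}$ take $\ell c\ell c\ell c$. For a triangle $\{\ell,\ell',c\}$ take the reverse of $ababcbcac$ with $a=c$, namely $cacbcbaba$ relabeled so that it ends with $c$. Reversal is valid here: if $w\simeq\eps$ and $w$ reverses exactly the set $A$ for every bicoloring, then its inverse string, which is the reversal since every letter is an involution, does the same. One can also use the symmetric form $b\,a\,b\,a\,b\,a$ for the edge case.

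\textbf{Counting leaves.} It remains to check that the leaves can always be split as required, with $c$ joined to one or two leaves and the rest partitioned into edges and triangles, each part of size $2$ or $3$. For $s=2$, take $v$ alone with one leaf. For $s=3$, take $v$ a triangle, which is essentially \autoref{lem:p3-three}. For $s\ge4$, choose $|v|\in\{2,3\}$ so that the number of remaining leaves is $0$ or at least $2$; any integer $\ge2$ is a sum of $2$'s and $3$'s. This is always possible.

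I expect the main obstacle to be the bookkeeping that makes the two conjugating $c$'s cancel: the block containing $c$ must come last and end in $c$. The rest is routine verification that each block acts on the graph $G*c$ and preserves it.
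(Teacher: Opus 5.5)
Your proposal is correct and is essentially the paper's argument: conjugate by the center $c$ so that $X$ becomes a clique in $G*c$, cover $X$ by blocks from \autoref{prop:edge} and \autoref{prop:triangle-three} at a cost of $3$ per vertex, and save two moves by cancelling a $cc$ pair where the center's block meets a conjugating inversion. The differences are cosmetic. The paper runs the center's block first, in $G$ itself (via \autoref{lem:p3-three} for odd $s$ or \autoref{prop:edge} for even $s$), and pairs the remaining leaves only into edges; you put the center's block last inside the conjugation and also allow leaf triangles, and your string-reversal step is unnecessary because $ababcbcac$ already ends in its third vertex, so you can simply let that vertex be the center.
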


\begin{proof}
Let the leaves of the star be $c_1,\ldots,c_{s-1}$.

If $s=2$, the assertion follows from \autoref{prop:edge}. Hence assume $s\ge 3$.

Suppose first that $s$ is odd. Apply \autoref{lem:p3-three} to the induced path $c_1cc_2$. This gives a graph-preserving string of length $9$ that reverses the colors of $c,c_1,c_2$. Next apply a local inversion at $c$. This changes the induced star on $X$ into a clique on $X$ and flips all leaves. Pair the remaining leaves as $c_3c_4,c_5c_6,\ldots,c_{s-2}c_{s-1}$ and, for each pair, use \autoref{prop:edge} in the clique. Finally apply a local inversion at $c$ again. The two inversions at $c$ restore the underlying graph, and the net color change on $X$ is exactly the reversal of all vertices in $X$. The resulting string has length
\[
9+1+3(s-3)+1=3s+2.
\]
The last symbol of the first length-$9$ string is $c$, and it is immediately followed by another $c$; cancelling this consecutive pair by \autoref{prop:involution} yields an equivalent string of length $3s$.

Now suppose that $s$ is even. Use \autoref{prop:edge} on the edge $cc_1$, then apply a local inversion at $c$, pair the remaining leaves as $c_2c_3,c_4c_5,\ldots,c_{s-2}c_{s-1}$ and reverse every pair using \autoref{prop:edge}, and finally apply a local inversion at $c$. The same color-counting argument shows that all vertices of $X$ are reversed and the underlying graph is restored. The length before cancellation is
\[
6+1+3(s-2)+1=3s+2.
\]
Again, the final $c$ in the edge-reversal string is followed by another $c$, and cancelling this pair gives an equivalent string of length $3s$.\qed
\end{proof}

The next lemma is a standard bottom-up decomposition of a rooted tree into vertex-disjoint stars.

\begin{lemma}\label{lem:star-decomposition}
Every tree $T$ with at least two vertices can be decomposed in polynomial time into vertex-disjoint stars, each with at least two vertices, whose vertex sets partition $V(T)$.
\end{lemma}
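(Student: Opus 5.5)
We argue by strong induction on $|V(T)|$, using a bottom-up greedy procedure on a rooted tree. If $T$ is itself a star (which covers every tree on two or three vertices), we output $\{V(T)\}$ and stop. Otherwise we root $T$ at an arbitrary vertex $r$ and pick a vertex $v$ of maximum depth among the non-leaf vertices other than those whose removal would leave a single vertex. Concretely, we take a deepest leaf $\ell$, let $p$ be its parent, and take as the first star the set $X$ consisting of $p$ together with all children of $p$. Since $\ell$ is deepest, every child of $p$ is a leaf, so $T[X]$ is a star with center $p$ and at least two vertices.

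We then remove $X$ and recurse on $T-X$. The only difficulty is that $T-X$ might be a single vertex, namely the parent $q$ of $p$, or it might be empty. If $T-X$ is empty, then $T=X$ is a star, which we have already excluded. Since $T-X$ is obtained by deleting a subtree hanging below $q$, it is connected, so it is a tree. If it has at least two vertices, induction gives a partition of it into stars, and we add $X$ to that partition. If $T-X=\{q\}$, then $T$ consists of $q$, $p$, and the children of $p$. In that case we instead use $X'=X\cup\{q\}$ when $p$ has exactly one child, because then $T$ is a path $P_4$ and $X'$ is not a star. So we handle this configuration directly.

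\begin{enumerate*}[label=(\roman*)]
\item If $p$ has exactly one child $\ell$, then $T$ is the path $q\,p\,\ell$, which is a star, contradicting our assumption.
\item If $T-X=\{q\}$ with $q$ adjacent only to $p$, then $T$ is a star centered at $p$, again a contradiction.
\end{enumerate*}

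Hence whenever $T$ is not a star, $T-X$ has at least two vertices, and the recursion is well defined. A cleaner way to present the same procedure is a single bottom-up pass: we process vertices in order of decreasing depth. Each unassigned vertex whose children are all unassigned leaves, or more generally each unassigned vertex with at least one unassigned child, takes all its currently unassigned children as a star centered at itself. When the root is left unassigned and alone, we attach it as a leaf to the star containing one of its children. If that child was a leaf of its star rather than its center, we instead split off that child together with the root as a new $S_2$, provided this leaves the old star with at least two vertices. Otherwise, the old star was an $S_2$, and we merge the root into it as a path $P_3$ centered at the child, which is a star.

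The main obstacle is this leftover-vertex bookkeeping: showing that every isolated leftover vertex can always be absorbed while keeping every part a star with at least two vertices. The case analysis above shows that one of the absorption options always works. Each step takes linear time, so the whole procedure runs in polynomial time.
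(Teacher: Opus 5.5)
Your proof is correct and takes essentially the paper's approach: both peel off, bottom-up, the star formed by the parent of a deepest remaining leaf together with its children (all leaves), and your rule ``if the remaining tree is a star, take it whole'' does the job of the paper's absorption of an isolated parent, which can only be the root. Two cleanups: delete the sentence claiming that $T$ is a $P_4$ and that $X'$ ``is not a star'', since when $T-X=\{q\}$ the tree is a star centered at $p$, as your items (i)--(ii) correctly show; and note that in your single-pass version the case where the root's child is a leaf of its star cannot occur, because a child of the root can only be assigned as a center.
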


\begin{proof}
Root $T$ at an arbitrary vertex $r$. We construct the stars greedily from the leaves upward. As long as vertices remain, choose a remaining vertex $v$ of maximum depth that has at least one remaining child. Let $C$ be the set of remaining children of $v$, and initially put $X=\{v\}\cup C$. Remove $X$ from the remaining tree. If the parent $p$ of $v$ exists and becomes isolated after this removal, then add $p$ to $X$ as well and remove it. The set $X$ induces a star, centered at $v$: its leaves are the vertices of $C$, and possibly also $p$. The process removes at least two vertices in each step and never disconnects the unprocessed part except by deleting completed leaf blocks. Hence it terminates with a partition of $V(T)$ into stars of size at least two. The construction is plainly polynomial.\qed
\end{proof}

We now prove the main upper bound in a slightly stronger localized form. This form will also be used in the transformation theorem.

\begin{theorem}\label{thm:localized-upper}
Let $B=(G,\beta)$ be a bicolored graph, and let $H$ be a connected induced subgraph of $G$ with $m\ge 2$ vertices. Then there exists a string $w$ over $V(H)$ of length at most $3m$ such that
\[
B_w=B^{V(H)}.
\]
\end{theorem}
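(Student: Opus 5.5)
The plan is to take a spanning tree $T$ of $H$, which exists because $H$ is connected and has $m\ge 2$ vertices. By \autoref{lem:star-decomposition}, $V(H)=V(T)$ is partitioned into sets $X_1,\dots,X_k$ such that each $T[X_i]$ is a star with $s_i\ge 2$ vertices. For each $i$ I will produce a string $w_i$ over $X_i$ with $|w_i|\le 3s_i$ and $B'_{w_i}=B'^{X_i}$ for \emph{every} bicolored graph $B'$ on the same underlying graph $G$. Since each $w_i$ preserves the underlying graph, the concatenation $w=w_1w_2\cdots w_k$ satisfies $B_w=B^{X_1\triangle\cdots\triangle X_k}=B^{V(H)}$. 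Its length is at most $\sum 3s_i=3m$.

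The obstacle is that $G[X_i]$ need not be an induced star: the center $c$ is adjacent in $G$ to all leaves $c_1,\dots,c_{s-1}$, but leaves may be adjacent to each other. So \autoref{lem:star-upper} cannot be quoted directly. I will redo its proof using only the fact that $c$ dominates $X$. The pieces are as follows.

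First, the initial block must reverse $c$ together with one or two leaves and must end in the letter $c$.
\begin{itemize}
\item If $s$ is even, use \autoref{prop:edge} on the edge $c_1c$ written as $c_1cc_1cc_1c$.
\item If $s$ is odd and $c_1c_2\notin E(G)$, use \autoref{lem:p3-three} with $a=c$; the string $c_1cc_1c_2c_1c_2cc_2c$ ends in $c$.
\item If $s$ is odd and $c_1c_2\in E(G)$, use \autoref{prop:triangle-three} on the triangle with the letter labelled ``$c$'' taken to be the center; that string also ends in $c$ and has length $9$.
\end{itemize}

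Second, pair the remaining leaves. For a pair $c_ic_j$ with $c_ic_j\in E(G)$, apply \autoref{prop:edge} directly in $G$, costing $6$ letters.

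Third, the pairs with $c_ic_j\notin E(G)$ become edges in $G*c$, since both endpoints lie in $N_G(c)$. For these I use the sandwich $c\,(\text{edge strings for these pairs})\,c$. The inner strings preserve the graph $G*c$, so the second inversion at $c$ restores $G$. Every leaf is flipped by both inversions at $c$, so those two flips cancel. The center $c$ itself is not flipped by either, and vertices outside $X$ are untouched in color as well. Hence the net color effect is exactly the reversal of the paired vertices.

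I order everything as: initial block, then the sandwich (if nonempty), then the edge-pair strings in $G$. The total before cancellation is at most $9+2+3(s-3)=3s+2$ in the odd case and $6+2+3(s-2)=3s+2$ in the even case. Because the initial block ends with $c$ and the sandwich starts with $c$, \autoref{prop:involution} cancels that consecutive pair, giving length at most $3s$. If the sandwich is empty, no inversions at $c$ are added; the length is then at most $9+6\cdot\frac{s-3}{2}=3s$ or $6+6\cdot\frac{s-2}{2}=3s$ directly.

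The main point to verify carefully is the third step. I need the edge strings inside the sandwich to be applied in $G*c$, where each such pair is genuinely adjacent. I also need the sandwich to be graph-preserving even though the inversion at $c$ changes adjacencies among $N_G(c)$ outside $X$. Both follow because every inner string preserves the current graph $G*c$ and $c*c$ cancels. Once this is checked, the per-star bound $3s_i$ and the summation finish the proof.
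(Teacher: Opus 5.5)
Your proof is correct. It shares the paper's skeleton: a spanning tree of $H$, the star decomposition of \autoref{lem:star-decomposition}, a graph-preserving string of length at most $3|X|$ for each part $X$, and concatenation. Your per-part count holds in every case, including cancelling the $c$ that ends the initial block against the $c$ that opens the sandwich.

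The difference is in how you handle a part $X$ that is not an induced star in $G$.

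\textbf{The paper's route.} It takes a maximum matching $M$ of $G[L]$, where $L=X\setminus\{c\}$. Matched pairs are reversed by \autoref{prop:edge} in $G$. The unmatched leaves $U$ are independent, so $G[\{c\}\cup U]$ is an induced star, and \autoref{lem:star-upper} is applied to it as a black box. When $M$ is perfect, there is a separate triangle case using \autoref{prop:triangle-three}.

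\textbf{Your route.} You re-run the proof of \autoref{lem:star-upper} under the weaker hypothesis that $c$ merely dominates $X$. Any pairing of the leaves works. Adjacent pairs are handled directly in $G$, and only the non-adjacent pairs go inside the $c\cdots c$ sandwich, where each becomes an edge of $G*c$.

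\textbf{What each buys.} Your route makes explicit that the sandwich only needs each sandwiched pair to be non-adjacent in $G$, not the whole sandwiched leaf set to be independent. It therefore needs no matching and no perfect/non-perfect case split. The paper's route gains modularity: it reuses \autoref{lem:star-upper} unchanged, at the cost of the extra matching-based reduction.
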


\begin{proof}
Let $T$ be a spanning tree of $H$. By \autoref{lem:star-decomposition}, $T$ has a vertex partition into stars $T_1,\ldots,T_q$, each of size at least two. We show that, for each $i$, the colors of exactly the vertices of $T_i$ can be reversed using at most $3|V(T_i)|$ local inversions while preserving the whole graph $G$.

Fix one star $T_i$ with center $c$ and vertex set $X$. If $|X|=2$, the claim follows from \autoref{prop:edge}. Hence assume $|X|\ge 3$. Let $L=X\setminus\{c\}$ and let $M$ be a maximum matching in $G[L]$.

If $M$ is a perfect matching of $G[L]$, choose an edge $uv\in M$. For each edge in $M\setminus\{uv\}$, use \autoref{prop:edge}; this reverses all leaves except $u$ and $v$. Since $u$ and $v$ are adjacent and both adjacent to $c$, the vertices $c,u,v$ form a triangle, and \autoref{prop:triangle-three} reverses exactly these three vertices. The total length is
\[
6(|M|-1)+9=6\left(\frac{|X|-1}{2}-1\right)+9=3|X|.
\]

Suppose now that $M$ is not perfect. Let $U=L\setminus V(M)$. Since $M$ is maximum, $U$ is independent in $G[L]$. Therefore $G[\{c\}\cup U]$ is an induced star centered at $c$. Reverse every matched pair using \autoref{prop:edge}, and then reverse the induced star $G[\{c\}\cup U]$ using \autoref{lem:star-upper}. The total length is
\[
6|M|+3(1+|U|)=3(1+2|M|+|U|)=3|X|.
\]

Thus each part $T_i$ can be processed independently within the claimed budget. Concatenating the resulting graph-preserving strings gives a string reversing exactly $V(H)$ of length at most
\[
\sum_{i=1}^q 3|V(T_i)|=3m.
\]\qed
\end{proof}

We obtain \autoref{cor:cr-upper} by Applying \autoref{thm:localized-upper} to each connected component of $G$ and concatenating the resulting strings.

\begin{corollary}\label{cor:cr-upper}
If $G$ is a graph on $n$ vertices with no isolated vertices, then $\crn(G)\le 3n$.
\end{corollary}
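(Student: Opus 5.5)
The plan is to apply \autoref{thm:localized-upper} separately to each connected component of $G$ and then concatenate the resulting strings, checking that the concatenation is a valid color-reversing string for every bicoloring.

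\textbf{Step 1: decompose.} Let $H_1,\ldots,H_k$ be the connected components of $G$, with $m_j=|V(H_j)|$. Each $H_j$ is a connected induced subgraph of $G$. Since $G$ has no isolated vertices, every $m_j\ge 2$. By \autoref{thm:localized-upper}, for the current bicolored graph there is a string $w_j$ over $V(H_j)$ with $|w_j|\le 3m_j$ and $B_{w_j}=B^{V(H_j)}$.

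\textbf{Step 2: check the strings act independently.} The strings produced by \autoref{thm:localized-upper} do not depend on $\beta$: they are built only from the graph via \autoref{prop:edge}, \autoref{prop:triangle-three} and \autoref{lem:star-upper}. Also, each $w_j$ leaves the underlying graph equal to $G$ after it is applied. Hence after applying $w_1$ we again have the graph $G$, now with a different coloring. So $w_2$ can be applied to this new bicolored graph and still flips exactly $V(H_2)$. Inducting on $j$, the string $w=w_1w_2\cdots w_k$ satisfies
\[
B_w=B^{V(H_1)\cup\cdots\cup V(H_k)}=B^{V(G)}.
\]
This holds for every bicoloring $\beta$ and returns the underlying graph to $G$. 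Its length is
\[
\sum_j 3m_j=3n.
\]
Therefore $\crn(G)\le 3n$.

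\textbf{Main obstacle.} The only delicate point is the independence claim in Step 2: the string $w_j$ chosen for one component must remain valid after earlier strings have changed the colors. Because each building block is graph-preserving and its color effect is a fixed set flip independent of $\beta$ (as noted in the definition of $\crn$), this is routine. I would state it explicitly rather than re-verify each construction.
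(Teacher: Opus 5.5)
Your proposal is correct and matches the paper's argument: apply \autoref{thm:localized-upper} to each connected component (each has at least two vertices because $G$ has no isolated vertices) and concatenate the resulting strings, which preserve the graph and do not depend on the coloring. One small slip: the total length is at most $\sum_j 3m_j = 3n$, not equal to it.
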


Before proving a matching upper bound for transforming one bicoloring into another, we obtain the following simple lemmas, one to flip the color of the center
of an induced $P_3$ and the other to flip the color of a degree-1 vertex.

\begin{lemma}\label{lem:p3-center}
Let $B=(G,\beta)$, and suppose that $b,c\in N_G(a)$ and $bc\notin E(G)$. Then there is a string of length $7$ over $\{a,b,c\}$ such that $B_w=B^a$.
\end{lemma}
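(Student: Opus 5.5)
Mirror the proof of \autoref{lem:p3-three}: move to the triangle created by a local inversion at $a$, apply a triangle identity there, undo the inversion at $a$, then cancel a doubled letter. Since $b,c\in N_G(a)$ and $bc\notin E(G)$, the vertices $a,b,c$ form a triangle in $G*a$. The triangle identity that reverses a single vertex is \autoref{prop:triangle-one}: for a triangle $x,y,z$, the string $xyxzyxz$ reverses exactly the color of $x$ and preserves the graph. The candidate string is therefore $a\,u\,a$, where $u$ is a length-$7$ instance of \autoref{prop:triangle-one} applied to the triangle $\{a,b,c\}$ in $G*a$.

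The color bookkeeping is as follows. Write $B_a=(G*a,\beta')$. Applying $u$ to $B_a$ gives $(G*a,\beta'')$, where $\beta''$ differs from $\beta'$ only at the reversed vertex. The final inversion at $a$ acts in $G*a$, whose neighborhood of $a$ is $N_G(a)$, since local complementation at $a$ does not change the neighborhood of $a$. So the final $a$ restores $G$ and flips $N_G(a)$ back. The net effect is that only the vertex reversed by $u$ changes color. If $u$ reverses $a$ itself, with $u=abacbac$, the outcome is exactly $B^a$, because $a\notin N_G(a)$ and hence $a$'s color is untouched by the two outer inversions.

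The remaining problem is length, since $a\,u\,a$ has length $9$. To reach $7$ we want $u$ to begin and end with $a$, so that both outer $a$'s cancel by \autoref{prop:involution}. The string $abacbac$ begins with $a$ but ends with $c$. I would look for a different word that reverses only $a$ and has $a$ at both ends. The first thing to try is inverting the word: the inverse of $abacbac$ is $cabcaba$, since each letter is an involution. This word also preserves the graph and reverses only $a$, as the inverse of a pure color flip is the same flip. However, it ends with $a$ but begins with $c$.

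So with either word we can cancel only one pair, which still gives length $8$. This parity obstacle is the main step. I would try to resolve it by using a conjugate or cyclic shift of the triangle identity. Conjugating by $a$ is harmless when the graph returns to the same graph, because the reversed vertex $a$ is not in the neighborhood of $a$, so its color is unaffected by the conjugation. I would search directly for a graph-preserving length-$7$ word on $\{a,b,c\}$ starting from the induced $P_3$ $b\!-\!a\!-\!c$, tracking states by hand. The reachable graphs on three vertices are few: the path centered at $a$, $b$, or $c$, and the triangle. The first candidate to test is $b\,a\,c\,b\,a\,c\,b$ and its variants, checking both the graph sequence and which vertices get flipped an odd number of times. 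The fallback is the cancellation route: $a\cdot cabcaba\cdot a$ reduces to $acabcab$ after cancelling the final pair, which is length $8$ before any further reduction. To reach $7$, I would then verify whether some adjacent letters also cancel or whether a $ba$-type rewriting applies.
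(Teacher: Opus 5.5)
Your route is exactly the paper's: invert at $a$ so that $a,b,c$ form a triangle in $G*a$, apply $u=abacbac$ from \autoref{prop:triangle-one}, then invert at $a$ again. Your color bookkeeping for $a\,u\,a$ is also correct: both outer inversions act on $N_{G}(a)=N_{G*a}(a)$, and neither touches $a$.

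The proposal breaks down only at the length count, but there it does fail to reach the conclusion. You claim that cancelling one pair leaves length $8$, and that a ``parity obstacle'' forces $u$ to begin and end with $a$. In fact, cancelling a pair $aa$ by \autoref{prop:involution} removes two letters, so $9-2=7$. Since $u=abacbac$ begins with $a$, the string $a\,u\,a=aabacbaca$ starts with $aa$. Deleting that pair leaves $bacbaca$, which has length $7$, lies over $\{a,b,c\}$, and satisfies $B_{bacbaca}=B^a$. This is precisely the paper's proof. Having $a$ at both ends of $u$ would give length $5$, which is not needed. Likewise, your ``fallback'' $acabcab$, obtained from $a\cdot cabcaba\cdot a$ by cancelling the final $aa$, already has length $7$ and is valid. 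It is the reverse of the paper's string, not a length-$8$ word.

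As submitted, the proposal ends with an open search rather than a proof, and that search is misdirected. The first candidate you name, $bacbacb$, does not restore the graph. Tracking it from the path $b\!-\!a\!-\!c$, it ends at edge set $\{ab,bc\}$, which is the path centered at $b$. The missing step is only the correct count, $9-2=7$; once you make it, your argument is complete and coincides with the paper's.
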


\begin{proof}
Set $u=abacbac$. As in the proof of \autoref{lem:p3-three}, the vertices $a,b,c$ form a triangle in $G*a$. By \autoref{prop:triangle-one}, applying $u$ to $B_a$ reverses exactly the color of $a$ relative to $B_a$ and preserves $G*a$. Thus $B_{a u a}=B^a$. Cancelling the initial $aa$ gives the length-$7$ string $bacbaca$.\qed
\end{proof}

\begin{lemma}\label{lem:pendant}
Let $B=(G,\beta)$, and let $b$ be a pendant vertex with unique neighbor $a$. Then
\[
B_{ababa}=B^b.
\]
\end{lemma}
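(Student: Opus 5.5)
The plan is to verify the identity by tracking the current graph and colors through the five local inversions $a,b,a,b,a$. Only the sets $\{a,b\}\cup A$ are affected, where $A=N_G(a)\setminus\{b\}$. Since every local inversion here is taken at $a$ or $b$, only adjacencies among neighbors of $a$ or $b$ can change. I will show that the current neighborhoods of $a$ and $b$ always lie inside $\{a,b\}\cup A$, so vertices outside this set are never affected. The key bookkeeping items are, at each step, the current neighborhoods of $a$ and $b$, the bipartite adjacency between $b$ and $A$, whether $G[A]$ is in its original or complemented state, and the sign changes on $a$, $b$ and $A$.

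\textbf{Step 1 (invert at $a$).} The neighborhood is $\{b\}\cup A$. Since $b$ is pendant, $b$ has no neighbors in $A$, so complementing makes $b$ adjacent to all of $A$ and complements $G[A]$. The colors of $b$ and $A$ flip.

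\textbf{Step 2 (invert at $b$).} Now $N(b)=\{a\}\cup A$. Complementing deletes all edges $aA$ and restores $G[A]$. The colors of $a$ and $A$ flip. At this point $a$ is adjacent only to $b$, the colors of $a$ and $b$ are flipped, and the colors of $A$ are back to the original.

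\textbf{Step 3 (invert at $a$).} Here $N(a)=\{b\}$, so no edge changes. Only $b$'s color flips, which restores it.

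\textbf{Step 4 (invert at $b$).} Again $N(b)=\{a\}\cup A$. This restores the edges $aA$ and complements $G[A]$ once more. The colors of $a$ and $A$ flip, so $a$ returns to its original color and $A$ is flipped.

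\textbf{Step 5 (invert at $a$).} Now $N(a)=\{b\}\cup A$, with $b$ complete to $A$ and $G[A]$ complemented. Complementing removes the edges $bA$ and restores $G[A]$. The colors of $b$ and $A$ flip.

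Tallying the result: the graph equals $G$, the color of $a$ was flipped twice, the colors of $A$ were flipped four times, and the color of $b$ was flipped three times. Hence $B_{ababa}=B^b$.

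The only delicate point is Step 3. There we need that $N(a)=\{b\}$ exactly, that is, that Step 2 removed \emph{all} other neighbors of $a$. This holds precisely because $N_G(a)=\{b\}\cup A$ and the pendant hypothesis made $b$ complete to $A$ after Step 1. This case analysis also covers the degenerate case $A=\emptyset$, where the computation reduces to flips on a single edge.
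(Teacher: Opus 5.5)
Your proposal is correct and follows essentially the same approach as the paper: both track the five inversions $a,b,a,b,a$ one at a time, and both hinge on the fact that after the first two inversions $a$'s only neighbor is $b$. The one difference is that the paper shortcuts the last two steps by noting that the fourth and fifth inversions undo the second and first, whereas you compute them directly; your version is slightly more explicit but equivalent.
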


\begin{proof}
Let $N=N_G(a)\setminus\{b\}$. Track the effect of the string $ababa$. The first inversion at $a$ flips the colors of $b$ and all vertices in $N$, and makes $b$ adjacent to every vertex of $N$. The following inversion at $b$ flips the colors of $a$ and all vertices in $N$ and toggles the adjacencies between $a$ and $N$ and inside $N$. The middle inversion at $a$ now sees only $b$ as a neighbor and flips only $b$. The fourth inversion, again at $b$, reverses the changes made by the second inversion, and the final inversion at $a$ reverses the changes made by the first inversion. Consequently the underlying graph is restored, the vertices in $N\cup\{a\}$ are flipped an even number of times, and $b$ is flipped an odd number of times. Hence $B_{ababa}=B^b$.\qed
\end{proof}

\begin{theorem}\label{thm:transformation-connected}
Let $G$ be a connected graph on $n\ge 2$ vertices, and let $B=(G,\beta)$ and $B'=(G,\beta')$ be two bicolored graphs with the same underlying graph $G$. Then there exists a string $w$ of length at most $3n$ such that $B_w=B'$.
\end{theorem}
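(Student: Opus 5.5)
Let $A=\{v\in V(G):\beta(v)\neq\beta'(v)\}$. It suffices to find a string $w$ with $G*w=G$ and $B_w=B^A$, of length at most $3n$. If $A=V(G)$ this is \autoref{thm:localized-upper} with $H=G$, and if $A=\emptyset$ we take $w=\eps$. Otherwise I would follow the same scheme as that theorem. Take a spanning tree $T$ of $G$ and split it into stars $T_1,\dots,T_q$ by \autoref{lem:star-decomposition}. For each part $X=V(T_i)$ with center $c$, I would build a graph-preserving string that reverses exactly $Y=A\cap X$, has length at most $3|X|$, and uses vertices of $X$ together, if needed, with a few neighbours in $G$. Concatenating these strings gives the result, because each of them restores $G$.

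Inside a part, the accounting mirrors the proof of \autoref{thm:localized-upper}. Take a maximum matching $M$ in $G[Y\setminus\{c\}]$ and reverse each matched pair by \autoref{prop:edge}, at cost $3$ per vertex. Let $U$ be the unmatched vertices of $Y\setminus\{c\}$.
\begin{itemize}
\item If $c\in Y$ and $U\neq\emptyset$: the set $\{c\}\cup U$ induces a star, and \autoref{lem:star-upper} handles it at $3$ per vertex.
\item If $c\in Y$ and $U=\emptyset$: when $M\neq\emptyset$, I would use a matched edge $uv$ with $c$ and \autoref{prop:triangle-three}, cost $9$ for three vertices. When $M=\emptyset$, so $Y=\{c\}$, I would reverse $c$ alone. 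This costs $7$ via \autoref{prop:triangle-one} or \autoref{lem:p3-center} (one of these applies when $|X|\ge3$, since $c$ has two leaves), which is within $3|X|\ge 9$.
\item If $c\notin Y$: the unmatched leaves in $U$ are pairwise nonadjacent. Each single-vertex reversal costs at most $7$, namely $5$ by \autoref{lem:pendant} if the vertex is pendant in $G$, and otherwise $7$ by \autoref{prop:triangle-one} or \autoref{lem:p3-center}. The unused budget from $c$ and from the leaves outside $Y$ has to pay the excess. A second option I would compare against is to reverse $U\cup\{c\}$ by \autoref{lem:star-upper} and then reverse $c$ alone at cost $7$.
\end{itemize}

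The main obstacle is this last case, and above all parts with $|X|=2$, i.e.\ an edge $ab$ with $Y=\{a\}$. The budget is $6$, while the lemmas give only $5$ (if $a$ is pendant) or $7$.
\begin{itemize}
\item When $G=K_2$, the single inversion $b$ already reverses exactly $a$.
\item In general I would first try to modify the decomposition so that such parts disappear. I would re-root or choose $T$ so that every vertex that must be flipped alone either is a leaf of a part of size at least $3$ or is pendant in $G$. Failing that, I would charge the extra cost $1$ to a neighbouring part, which has slack whenever it is not fully in $A$.
\end{itemize}
A careful case check that the total slack always covers these deficits is where I expect most of the work to lie.
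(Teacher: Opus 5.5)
The case you leave open is the whole difficulty, and the remedies you sketch do not work.

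\textbf{The case $c\notin Y$ fails outright.} This is not a deficit of $1$ that can be moved between parts. Take $G=K_{1,k}$ with $k\ge 2$ (already $P_3$), and let $A$ be the set of leaves. Since $G$ is a tree and every leaf must share a part with its only neighbour, the spanning tree and the star decomposition are forced. There is a single part $X=V(G)$, with $M=\emptyset$ and $U=A$. Your options compare with the budget $3(k+1)$ as follows:
\begin{itemize}
\item Reversing each leaf by \autoref{lem:pendant} costs $5k$, which exceeds $3(k+1)$ for $k\ge2$.
\item Applying \autoref{lem:star-upper} to $X$ and then reversing $c$ alone costs $3(k+1)+7$. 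It still costs $3(k+1)+1$ if you flip $c$ with one inversion at a pendant leaf.
\end{itemize}
There is no other part to borrow from.

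\textbf{The size-two parts are not repaired either.} Take $C_4=v_1v_2v_3v_4$ with $A=\{v_1,v_3\}$. Every spanning tree is a path whose only star decomposition is two edges. Each edge contains exactly one vertex of $A$, and no vertex is pendant. With your costs each part needs $7$ against a budget of $6$. So both parts are in deficit although neither lies inside $A$, and your claim that such a neighbouring part has slack fails.

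\textbf{The missing idea.} Organise the argument around $G[A]$ rather than a spanning tree of $G$, and pay for the isolated vertices of $G[A]$ with their unflipped neighbours, using conjugation by an inversion.
\begin{itemize}
\item Components $Q$ of $G[A]$ with at least two vertices are reversed by \autoref{thm:localized-upper} at cost $3|Q|$, using only their own vertices.
\item Let $I$ be the set of isolated vertices of $G[A]$. Suppose some $x\notin A$ has a set $R$ of at least two neighbours in $I$. Then $R$ is independent in $G$, hence a clique in $G*x$. So $x\,w_R\,x$, with $w_R$ reversing $R$ in $G*x$ by \autoref{thm:localized-upper}, reverses exactly $R$ at cost $3|R|+2\le 3(|R|+1)$. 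This gives $8$ for $P_3$ and for the $C_4$ example, and $3k+2$ for the star.
\item When no such $x$ is left, a vertex $y\in I$ with two still-unused unflipped neighbours $x_1,x_2$ is reversed alone in $7$ steps, by \autoref{prop:triangle-one} or \autoref{lem:p3-center}. This cost is charged to $\{y,x_1,x_2\}$.
\item Otherwise $y$ has exactly one unused unflipped neighbour $x$. Here $y$ is pendant in $G$, because a used vertex never keeps a neighbour in $I$. Indeed, the first rule takes all its $I$-neighbours, and later each unused unflipped vertex has at most one $I$-neighbour. So \autoref{lem:pendant} reverses $y$ at cost $5$, charged to $\{y,x\}$.
\end{itemize}
The charged sets are disjoint and each step costs at most three times its charged set, which gives the bound $3n$.
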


\begin{proof}
Let
\[
V_0=\{v\in V(G):\beta(v)=\beta'(v)\},\qquad
V_1=\{v\in V(G):\beta(v)=-\beta'(v)\}.
\]
It suffices to reverse exactly the vertices of $V_1$.

If $V_1=\emptyset$, there is nothing to prove. If $V_0=\emptyset$, the claim follows from \autoref{cor:cr-upper}. Hence assume that both sets are nonempty.

Let $I$ be the set of isolated vertices of $G[V_1]$, and let $C=V_1\setminus I$. Each connected component of $G[C]$ has at least two vertices. By \autoref{thm:localized-upper}, each such component $Q$ can be reversed using at most $3|Q|$ local inversions, without affecting any vertex outside $Q$. After processing all components of $G[C]$, it remains to reverse exactly the vertices in $I$.

We now process $I$ iteratively. Let $A$ be the set of vertices of $V_0$ that are still available for the length accounting; initially $A=V_0$. The following three rules are applied until $I$ becomes empty.

\smallskip
\noindent\textbf{Rule 1.} Suppose some $x\in A$ has at least two neighbors in $I$. Let $R=N_G(x)\cap I$. Since the vertices of $I$ are isolated in $G[V_1]$, the set $R$ is independent. Apply a local inversion at $x$, reverse the clique induced by $R$ using \autoref{thm:localized-upper}, and then apply a local inversion at $x$ again. This reverses exactly the colors of the vertices in $R$ and restores the graph. Its length is at most $2+3|R|\le 3(|R|+1)$. Remove $R$ from $I$ and remove $x$ from $A$.

\smallskip
\noindent\textbf{Rule 2.} Suppose Rule 1 does not apply and some $y\in I$ has at least two neighbors in $A$. Choose distinct $x_1,x_2\in N_G(y)\cap A$. If $x_1x_2\in E(G)$, then $x_1,x_2,y$ form a triangle and \autoref{prop:triangle-one} gives a length-$7$ string reversing only $y$. If $x_1x_2\notin E(G)$, then $x_1yx_2$ is an induced $P_3$ with center $y$, and \autoref{lem:p3-center} gives a length-$7$ string reversing only $y$. In either case the length is at most $7\le 3\cdot 3$. Remove $y$ from $I$ and remove $x_1,x_2$ from $A$.

\smallskip
\noindent\textbf{Rule 3.} Suppose neither Rule 1 nor Rule 2 applies. We claim that every remaining vertex $z\in I$ has exactly one neighbor in $A$. Since $G$ is connected and $z$ has no neighbor in $V_1$, it has at least one neighbor in $V_0$. Moreover, a vertex removed from $A$ in Rule 1 had all its neighbors in the then-current set $I$ processed at the same time, and a vertex removed from $A$ in Rule 2 had no other neighbor in the then-current set $I$ because Rule 1 did not apply. Thus no removed vertex of $V_0$ is adjacent to a still-unprocessed vertex of $I$. Hence $z$ has a neighbor in $A$. Since Rule 2 does not apply, it has exactly one such neighbor, say $x$. Therefore $z$ is pendant in $G$, with unique neighbor $x$. By \autoref{lem:pendant}, the string $xz x z x$ has length $5\le 3\cdot 2$ and reverses only $z$. Remove $z$ from $I$ and remove $x$ from $A$.

Each rule reverses at least one remaining vertex of $I$, preserves the underlying graph, and does not disturb any vertex already processed. Thus the procedure terminates and reverses exactly $V_1$.

It remains to bound the length. The first phase contributes at most $3|C|$. In the second phase, the charged sets are respectively $R\cup\{x\}$ in Rule 1, $\{y,x_1,x_2\}$ in Rule 2, and $\{z,x\}$ in Rule 3. These charged sets are pairwise disjoint, and each rule uses at most three times the size of its charged set. Therefore the second phase contributes at most $3(|I|+|V_0|)$. Altogether,
\[
|w|\le 3|C|+3|I|+3|V_0|=3n.
\]
This completes the proof.\qed
\end{proof}

By applying \autoref{thm:transformation-connected} independently to each connected component and concatenating the resulting strings, we obtain
\autoref{cor:transformation-no-isolates}.

\begin{corollary}\label{cor:transformation-no-isolates}
Let $G$ be a graph on $n$ vertices with no isolated vertices. Any two bicolorings of $G$ can be transformed into each other using at most $3n$ local inversions.
\end{corollary}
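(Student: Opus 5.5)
The plan is to reduce \autoref{cor:transformation-no-isolates} to \autoref{thm:transformation-connected} one component at a time. Let $G_1,\ldots,G_k$ be the connected components of $G$, with $n_i=|V(G_i)|$, so that $\sum_i n_i=n$. Because $G$ has no isolated vertices, each $n_i\ge 2$, and this is exactly the hypothesis \autoref{thm:transformation-connected} needs. Given bicolorings $\beta,\beta'$ of $G$, I would restrict them to $\beta|_{V(G_i)}$ and $\beta'|_{V(G_i)}$. The theorem then supplies a string $w_i$ over $V(G_i)$ with $|w_i|\le 3n_i$ that carries $(G_i,\beta|_{V(G_i)})$ to $(G_i,\beta'|_{V(G_i)})$. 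I would take $w=w_1w_2\cdots w_k$.

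The key step is to check that the strings act independently on the components. A local inversion at a vertex $v\in V(G_i)$ only complements edges inside $N(v)\subseteq V(G_i)$ and only flips colors inside $N(v)$. So it never creates an edge between different components, and it changes neither the adjacencies nor the colors outside $G_i$. Consequently, throughout the application of $w_i$, the current graph on $V(G_i)$ evolves exactly as it would inside $G_i$ alone. By induction on the length of the string, $B_{w_1\cdots w_i}$ agrees with $\beta'$ and with the original graph on $G_1,\ldots,G_i$, and it still agrees with $B$ on the remaining components.

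Summing the bounds gives $|w|\le\sum_i 3n_i=3n$, and $B_w=(G,\beta')$. Since the roles of $\beta$ and $\beta'$ are symmetric, the transformation works in either direction; alternatively one can reverse the string, because each inversion is an involution by \autoref{prop:involution}. The argument has essentially no obstacle. The only point to state carefully is the locality fact that inversions never cross components, and that is immediate from the definition.
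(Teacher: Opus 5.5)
Your proposal is correct and matches the paper's proof exactly. The paper also applies \autoref{thm:transformation-connected} to each component (each has at least two vertices because there are no isolated vertices) and concatenates the strings; your check that inversions never cross components just makes explicit the locality the paper leaves implicit.
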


\section{Lower bounds}\label{sec:complete}

We next show that the upper bound is best possible for complete graphs and star graphs.

\begin{lemma}\label{lem:first-return-clique}
Let $n\ge 3$, and let $B=(G,\beta)$ be a bicolored complete graph on
$n$ vertices. Let $\beta'\ne \beta$ be another bicoloring of $G$, and let
$w$ be a shortest string of local inversions such that applying $w$ to
$B$ results in $B'=(G,\beta')$. Let $w''$ be the shortest nonempty prefix
of $w$ such that, after applying $w''$ to $B$, the resulting bicolored
graph has underlying graph $G$. Then
\[
        w''=aba
\]
for some two distinct vertices $a,b\in V(G)$. Moreover, if
$(G,\beta'')$ is the bicolored graph obtained from $B$ by applying
$w''$, then
\[
        \beta''(v)=-\beta(v) \quad \text{if and only if} \quad v=a .
\]
\end{lemma}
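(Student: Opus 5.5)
The plan is to follow the evolution of the underlying graph along $w$ and use minimality of $w$ to exclude every branch except $aba$. First, $w''$ is well defined. Since $\beta'\ne\beta$, the string $w$ is nonempty. The full string $w$ returns the underlying graph to $G$, so some nonempty prefix does, and $w''$ is the shortest one. We also record the consequence of minimality that we will use: if a factor $u$ of $w$ could be replaced by a strictly shorter string $u'$ with the same effect on the bicolored graph reached at that point, we would obtain a shorter string transforming $B$ into $B'$. In particular, by \autoref{prop:involution}, $w$ never contains two equal consecutive letters.

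Let $a$ be the first letter of $w$. In $G=K_n$ we have $N_G(a)=V(G)\setminus\{a\}$, so $G*a$ is the star centered at $a$. Because $n\ge 3$, this star has at least two leaves, so it is not complete. Hence $|w''|\ge 2$.

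Next, look at the star centered at $a$. A local inversion at a leaf $b$ has neighborhood $\{b\}$'s unique neighbor $a$, that is, $N(b)=\{a\}$. Complementing a single vertex changes nothing, so the graph is unchanged and only the color of $a$ is flipped. Now consider the second letter. It cannot be $a$, since $w$ has no repeated consecutive letters. So it is some leaf $b$, the graph is still the star, and the color of $a$ has been flipped.

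Now consider the third letter, which exists because the star is not $G$. There are three cases.
\begin{itemize}
\item It cannot be $b$, again by the absence of repeated consecutive letters.
\item If it were another leaf $c\ne a,b$, then on the star centered at $a$ the string $bc$ fixes the graph and flips $a$ twice. So $bc$ would be equivalent to $\eps$ there, and deleting it would shorten $w$, a contradiction.
\item Hence the third letter is $a$. Its inversion turns the star back into $K_n$.
\end{itemize}
Therefore $w''=aba$.

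Finally, we compute colors. The first inversion at $a$ flips every vertex except $a$. The inversion at $b$ flips only $a$. The final inversion at $a$ again flips every vertex except $a$. Thus $a$ is flipped exactly once and every other vertex is flipped twice, which gives $\beta''(v)=-\beta(v)$ if and only if $v=a$.

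The only delicate step is the exclusion of a second leaf $c$ at the third position. That is where shortestness must be used through the equivalence $bc\sim\eps$ on the star, rather than just through \autoref{prop:involution}.
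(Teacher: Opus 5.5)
Your proof is correct and takes essentially the same approach as the paper: the first inversion turns $K_n$ into the star centered at $a$, leaf inversions on that star only flip $a$, and minimality of $w$ (via $aa\sim\eps$ and the cancellation of two leaf inversions) forces $w''=aba$, with the same color count at the end. The only difference is cosmetic: you argue letter by letter, whereas the paper first writes $w''=ab_1\cdots b_ma$ and then rules out $m=0$ and $m\ge 2$.
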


\begin{proof}
Since $\beta'\ne\beta$, the string $w$ is nonempty. Let the first symbol
of $w$ be $a$. Since the underlying graph of $B$ is the complete graph
$G=K_n$, the neighborhood of $a$ in $G$ is $V(G)\setminus\{a\}$. Thus a
local inversion at $a$ complements the complete graph induced by
$V(G)\setminus\{a\}$ and leaves all edges incident with $a$ unchanged.
Hence, after applying the first operation, the underlying graph is the
star with center $a$ and leaf set $V(G)\setminus\{a\}$.

We now consider the operations before the underlying graph first becomes
complete again. While the underlying graph is this star, a local inversion
at a leaf $b\ne a$ does not change the underlying graph, because the
neighborhood of $b$ consists only of the single vertex $a$. Such an
operation only changes the color of $a$. In contrast, a local inversion
at the center $a$ complements the independent set induced by the leaves,
and therefore restores the complete graph.

Since $w''$ is the shortest nonempty prefix of $w$ after which the
underlying graph is again $G$, it follows that $w''$ must have the form
\[
        w'' = a b_1 b_2 \cdots b_m a,
\]
where $m\ge 0$ and each $b_i$ is a vertex distinct from $a$.

We claim that $m=1$. If $m=0$, then $w''=aa$. Since local inversion is an
involution, the prefix $aa$ has no effect on either the underlying graph
or the coloring. Deleting this prefix from $w$ would therefore give a
shorter string with the same final result, contradicting the minimality
of $w$.

Now suppose that $m\ge 2$. During the intermediate operations
$b_1,b_2,\ldots,b_m$, the underlying graph remains the same star centered
at $a$. Each such operation changes only the color of $a$ and leaves the
underlying graph unchanged. Hence any two intermediate leaf inversions
cancel each other in their net color effect and have no effect on the
underlying graph. Deleting two of them therefore produces a shorter string
with exactly the same final bicolored graph, again contradicting the
minimality of $w$. Thus $m=1$.

Therefore
\[
        w''=aba
\]
for some vertex $b\ne a$.

It is straightforward to verify that the string $aba$ changes only the color of $a$, leaving both the underlying graph and the colors of all other vertices unchanged.
This proves the lemma.\qed
\end{proof}

\begin{theorem}\label{thm:complete-exact}
For every $n\ge 3$, we have
\[
        \crn(K_n)=3n .
\]
\end{theorem}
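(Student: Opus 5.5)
The upper bound $\crn(K_n)\le 3n$ follows directly from \autoref{cor:cr-upper}, since $K_n$ has no isolated vertices. The work lies in the lower bound. The plan is to use \autoref{lem:first-return-clique} to split any optimal color-reversing string into blocks of length $3$, each of which flips exactly one vertex. Then at least $n$ such blocks are needed, giving length at least $3n$.

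Concretely, let $w$ be a shortest string with $B_w=B^{V(G)}$ for $B=(K_n,\beta)$. Since $\beta'=-\beta\ne\beta$, \autoref{lem:first-return-clique} applies. Write $w=w''w_1$, where $w''=a_1b_1a_1$ is the shortest nonempty prefix returning to $K_n$. After $w''$ we have the bicolored complete graph $(K_n,\beta_1)$ with $\beta_1=h_{\{a_1\}}(\beta)$, and $w_1$ transforms it into $(K_n,-\beta)$.

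\textbf{Minimality is inherited.} The suffix $w_1$ is a shortest string from $(K_n,\beta_1)$ to $(K_n,-\beta)$. Otherwise we could replace it by a shorter one and obtain a shorter $w$. If $\beta_1\ne-\beta$ (that is, unless $n=1$, excluded), the lemma applies again to $w_1$.

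\textbf{Iterating.} By induction, $w$ factors as $(a_1b_1a_1)(a_2b_2a_2)\cdots(a_kb_ka_k)$. Each block preserves $K_n$ and flips exactly the single vertex $a_i$. The factorization stops only when the current coloring equals the target. At that point the remaining suffix is empty, since a shortest string between equal bicolored graphs is $\eps$.

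\textbf{Counting.} The net effect is $B^{A}$, where $A$ is the set of vertices appearing an odd number of times among $a_1,\dots,a_k$. Since every vertex must be flipped, $A=V(G)$, so $k\ge n$ and $|w|=3k\ge 3n$. This also shows the effect is independent of the initial coloring, consistent with the definition of $\crn$.

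The main point of care is checking that the hypotheses of \autoref{lem:first-return-clique} hold at each stage, namely that the target differs from the current coloring and that minimality passes to the suffix. Both follow from the exchange argument above. The requirement $n\ge3$ is needed by the lemma itself: for $n=2$, $K_2$ is a star and inversions at leaves change the graph differently.
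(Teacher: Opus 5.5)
Your proposal is correct and follows the paper's proof essentially step for step. Like the paper, you take the upper bound from \autoref{cor:cr-upper}, factor a shortest reversing string into first-return blocks $a_ib_ia_i$ via \autoref{lem:first-return-clique} (with minimality inherited by suffixes), and count that every vertex must occur as some $a_i$. One small correction to your closing aside: for $n=2$ no local inversion changes $K_2$ at all. The first return to $K_2$ therefore happens after a single symbol, and in fact $ab$ already reverses both colors, giving $\crn(K_2)=2$; this is the real reason the lemma needs $n\ge 3$.
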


\begin{proof}
The upper bound $\crn(K_n)\le 3n$ follows from
\autoref{cor:cr-upper}. It remains to prove the lower bound.

Let $G=K_n$, and let $\beta$ be an arbitrary bicoloring of $G$. Let
\[
        B=(G,\beta).
\]
Let $w$ be a shortest string of local inversions which transforms $B$
into $(G,-\beta)$. We prove that $|w|\ge 3n$.

We decompose $w$ into consecutive first-return blocks to the clique.
More precisely, let $w_1$ be the shortest nonempty prefix of $w$ such
that, after applying $w_1$ to $B$, the underlying graph is again $G$.
By \autoref{lem:first-return-clique}, we have
\[
        w_1=a_1b_1a_1
\]
for two distinct vertices $a_1,b_1\in V(G)$, and the net effect of
$w_1$ is to change only the color of $a_1$.

After deleting this prefix, the remaining suffix of $w$ is still a
shortest string transforming the current bicolored clique into the final
bicolored clique $(G,-\beta)$; otherwise, replacing it by a shorter such
string would contradict the minimality of $w$. Hence, as long as the
current coloring is not yet $-\beta$, we may apply
\autoref{lem:first-return-clique} again to the remaining suffix. In this
way, $w$ is decomposed into consecutive blocks
\[
        w=w_1w_2\cdots w_t,
\]
where each block $w_i$ has the form
\[
        w_i=a_i b_i a_i
\]
with $a_i\ne b_i$, has length $3$, restores the underlying graph to
$G$, and changes the color of exactly one vertex, namely $a_i$.

Since the final coloring is $-\beta$, the color of every vertex of $G$
must be changed an odd number of times over the whole string $w$.
However, each block $w_i$ changes the color of exactly one vertex. Thus
the number of blocks must be at least $n$; indeed, each of the $n$
vertices must occur as $a_i$ in an odd, and hence nonzero, number of
blocks. Therefore
\[
        |w|=\sum_{i=1}^{t}|w_i|=3t\ge 3n .
\]
This completes the proof.\qed
\end{proof}

We now prove that the same lower bound holds for stars.

\begin{lemma}\label{lem:star-transition}
Let $V$ be a set of $n\ge 3$ vertices, and for each $x\in V$, let $S_x$
denote the star on $V$ with center $x$. Let $c,d\in V$ with $c\ne d$.
Assume that the current underlying graph is $S_c$. Suppose that a
substring of local inversions starts with the symbol $c$ and that, after
this first inversion, the next underlying graph in the sequence which is
a star is $S_d$. Then this substring is exactly
\[
        cd .
\]
Moreover, for every bicoloring $\beta$ of $S_c$, applying the string
$cd$ to $(S_c,\beta)$ produces a bicolored graph with underlying graph
$S_d$, and its net effect on the coloring is to reverse precisely the
colors of $c$ and $d$.
\end{lemma}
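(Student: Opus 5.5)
The approach is to follow the underlying graph step by step, starting from $S_c$. The first symbol is $c$, and $N_{S_c}(c)=V\setminus\{c\}$. A local inversion at $c$ complements the independent set $V\setminus\{c\}$, which turns it into a clique. Since edges at $c$ are unchanged, the resulting graph is the complete graph $K$ on $V$. Because $n\ge 3$, this graph is not a star, so the substring cannot stop after $c$. In the same step the colors of all vertices other than $c$ are reversed.

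Next I would examine what happens from the complete graph $K$. For any vertex $x$, the inversion at $x$ complements $K-x$ into an independent set. Edges incident to $x$ are kept, so the new graph is exactly $S_x$, which is a star. Therefore the second symbol $x$ already produces the next star in the sequence. By hypothesis this star is $S_d$, so $x=d$ (recall that $S_x=S_d$ forces $x=d$ when $n\ge 3$, since the center is the unique vertex of degree $n-1\ge 2$). In particular the substring is exactly $cd$. I would also note that $x=c$ would give $S_c$ again, which is excluded because $d\ne c$, though the hypothesis already settles this.

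For the color effect, I would count flips. The inversion at $c$ flips every vertex of $V\setminus\{c\}$. The inversion at $d$ in $K$ flips every vertex of $V\setminus\{d\}$. So $c$ is flipped once, $d$ is flipped once, and every other vertex is flipped twice. Hence exactly $c$ and $d$ change color, whatever $\beta$ is, and the final underlying graph is $S_d$.

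There is no real obstacle here. The one point that needs care is reading the hypothesis correctly: "the next star" means the first intermediate graph after the initial $c$ that is a star, and the argument must confirm that the intermediate graph $K$ is not a star, which is where $n\ge 3$ is used.
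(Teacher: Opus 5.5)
Your proposal is correct and follows the paper's proof essentially step for step. The inversion at $c$ turns $S_c$ into the complete graph, any next inversion at $x$ yields $S_x$ and so must be at $d$, and the flip count leaves exactly $c$ and $d$ reversed; your explicit checks that $K_n$ is not a star and that $S_x=S_d$ forces $x=d$ when $n\ge 3$ are details the paper leaves implicit.
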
 

\begin{proof}
After the first local inversion at $c$, the independent set induced by
the leaves of $S_c$ is complemented. Hence the underlying graph becomes
the complete graph on $V$.

From the complete graph, a local inversion at a vertex $x$ complements
the complete graph induced by $V\setminus\{x\}$. Therefore the resulting
underlying graph is the star $S_x$. Since, by assumption, the next star
obtained is $S_d$, the next operation must be the local inversion at
$d$. Thus the corresponding substring is exactly $cd$.

It remains to compute the effect on the coloring. The local inversion at
$c$ in the star $S_c$ changes the colors of all vertices in
$V\setminus\{c\}$. The subsequent local inversion at $d$ in the complete
graph changes the colors of all vertices in $V\setminus\{d\}$. Hence
each vertex in
\[
        V\setminus\{c,d\}
\]
is changed twice, while $c$ and $d$ are each changed exactly once.
Therefore the net effect of the string $cd$ is to reverse precisely the
colors of $c$ and $d$.
\end{proof} \qed

\begin{theorem}\label{thm:star-exact}
For every $n\ge 3$, we have
\[
        \crn(S_n)=3n .
\]
\end{theorem}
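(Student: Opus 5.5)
The upper bound $\crn(S_n)\le 3n$ is immediate from \autoref{cor:cr-upper}. For the lower bound, I would fix a center $c$ and a bicoloring $\beta$. I would then take a shortest string $w$ transforming $(S_c,\beta)$ into $(S_c,-\beta)$, and describe the underlying graph after every prefix of $w$. The plan is to show that each vertex must at some point serve as the center, and that this forces $3n$ moves.

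\textbf{Step 1: the graphs that can occur.} From a star $S_x$ there are two kinds of move.
\begin{enumerate*}[label=(\roman*)]
\item A local inversion at a leaf has neighborhood $\{x\}$. It leaves the graph unchanged and reverses only the color of the current center $x$.
\item A local inversion at $x$ produces $K_n$.
\end{enumerate*}
From $K_n$, an inversion at $y$ produces $S_y$. Hence every intermediate graph is a star or $K_n$. By \autoref{lem:star-transition}, every passage through $K_n$ has the form $xy$. If $y\ne x$, it moves the center from $x$ to $y$ and reverses exactly $x$ and $y$. If $y=x$, the pair $xx$ could be cancelled by \autoref{prop:involution}, contradicting minimality, so $y\ne x$ always.

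\textbf{Step 2: the walk of centers.} By Step 1, $w$ splits into transitions of length $2$ and leaf moves of length $1$. The successive centers form a closed walk $c=x_0,x_1,\dots,x_t=c$ with consecutive entries distinct. Leaf moves are interspersed at the stops of this walk. Two leaf moves at the same stop with no transition between them commute and cancel in their effect, so minimality allows at most one leaf move per stop; this refinement is not needed for the bound itself.

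\textbf{Step 3: parity count.} Consider how often each vertex $v$ is reversed by transitions. Each transition $x_{i-1}x_i$ reverses $x_{i-1}$ and $x_i$. Each occurrence of $v$ in the closed walk therefore receives one reversal on arrival and one on departure, where the start and end copies of $c$ are paired together. So transitions reverse every vertex an even number of times. Since every vertex must be reversed an odd number of times in total, every vertex must receive an odd, and hence positive, number of leaf moves while it is the center. This gives at least $n$ leaf moves. It also shows that every vertex appears in the closed walk.

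\textbf{Step 4: counting transitions.} A closed walk from $c$ that visits all $n\ge 3$ vertices has at least $n$ steps: it needs $n-1$ steps to reach the other vertices and at least one more to return to $c$. Therefore
\[
|w|\ge 2t+n\ge 2n+n=3n.
\]

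The main obstacle is the parity observation in Step 3, namely that transitions contribute only even reversal counts. It needs careful handling of the endpoint $c$, whose start and end occurrences must be paired to give an even count. Everything else follows directly from \autoref{lem:star-transition} and the analysis of leaf inversions.
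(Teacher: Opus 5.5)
Your proposal is correct and follows essentially the same route as the paper. It uses the same split into leaf inversions and center transitions $xy$ via \autoref{lem:star-transition}, the same parity argument on the closed walk of centers forcing an odd (hence positive) number of leaf inversions at every vertex, and the same count $2n+n$; the only difference is cosmetic, namely that you take the upper bound from \autoref{cor:cr-upper} instead of directly from \autoref{lem:star-upper}, which is equally valid.
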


\begin{proof}
The upper bound $\crn(S_n)\le 3n$ follows from
\autoref{lem:star-upper}. We prove the lower bound.

Let $S_n$ be a star on vertex set $V$, and let its center be
$c_0$. Let $\beta$ be an arbitrary bicoloring of $S_n$, and let $w$ be a
shortest string of local inversions which transforms $(S_n,\beta)$ into
$(S_n,-\beta)$.

During the application of $w$, whenever the underlying graph is a star,
say $S_c$, an inversion at a leaf of $S_c$ leaves the underlying graph
unchanged and changes only the color of the center $c$. We call such an
operation a \emph{leaf inversion}. An inversion at the center $c$ changes
the underlying graph from the star $S_c$ to the complete graph on $V$.
The next time the underlying graph becomes a star, it is obtained by
applying a local inversion to the complete graph.

We call the corresponding substring a \emph{center transition}. More
precisely, if the process is at the star $S_c$, then applies a local
inversion at $c$, and the next star reached is $S_d$, we call this a
center transition from $c$ to $d$. Since $w$ is shortest, we may assume
that $d\ne c$: if $d=c$, then the transition is the string $cc$, which
has no effect on either the underlying graph or the coloring, and hence
could be deleted.

By \autoref{lem:star-transition}, every center transition from $c$ to
$d$, with $c\ne d$, consists exactly of the two local inversions $cd$.
Moreover, its net effect on the coloring is to reverse precisely the
colors of $c$ and $d$.

Let
\[
        c_0,c_1,\ldots,c_t
\]
be the sequence of centers of the successive star states reached through
center transitions. Since the initial and final underlying graphs are the
same star $S_n=S_{c_0}$, we have
\[
        c_t=c_0 .
\]
Thus the center transitions define a closed walk
\[
        c_0,c_1,\ldots,c_t=c_0
\]
in the complete graph on vertex set $V$.

We now examine the parity of the color changes. Fix a vertex $v\in V$.
By \autoref{lem:star-transition}, a center transition changes the color
of $v$ exactly when the corresponding edge of the closed walk is incident
with $v$. Since the walk is closed, the number of such incidences is even.
Therefore the total number of color changes contributed to $v$ by all
center transitions is even.

On the other hand, the final coloring is $-\beta$, so the total number of
color changes at every vertex must be odd. Hence, for each vertex
$v\in V$, there must be an odd, and in particular nonzero, number of leaf
inversions performed while the current star is centered at $v$. Since a
leaf inversion in the star $S_v$ changes only the color of $v$, this is
the only way to change the parity of the color changes at $v$ after the
even contribution coming from the center transitions.

Consequently, the process must visit a star centered at every vertex of
$V$. Hence the closed walk
\[
        c_0,c_1,\ldots,c_t=c_0
\]
visits all $n$ vertices. Since it is a closed walk with no loops and it
visits all $n$ vertices, it has length at least $n$. Therefore the number
of center transitions is at least $n$.

Each center transition contributes exactly two local inversions. In
addition, as shown above, there must be at least one leaf inversion for
each of the $n$ vertices. Hence
\[
        |w| \ge 2n+n=3n .
\]
This completes the proof.\qed
\end{proof}
\section{Concluding remarks}\label{sec:conclusion}

We proved that every graph on $n$ vertices without isolated vertices admits a global color reversal using at most $3n$ local inversions. The bound is tight, as witnessed by complete graphs and stars. We also proved the same $3n$ upper bound for transforming one bicoloring into another on a graph without isolated vertices.

Several natural questions remain open. First, for which graph classes (other than complete graphs and star graphs) is the bound $3n$ attained?
Second, if $\beta$ and $\beta'$ are two bicolorings of the same graph $G$, is the minimum number of local inversions needed to transform $\beta$ into $\beta'$ always bounded above by $\crn(G)$?
Algorithmic questions concerning these parameters also remain unexplored.
\bibliographystyle{splncs04}
\bibliography{CR_ref}

\end{document}